\documentclass[reprint,superscriptaddress,aps,prl]{revtex4-2}

\usepackage[usenames,dvipsnames]{xcolor}
\usepackage[utf8]{inputenc} 
\usepackage{bm}
\usepackage{graphicx}
\usepackage{booktabs}
\usepackage{amsmath,amsthm,amssymb,amsfonts}
\usepackage{mathtools}
\usepackage{physics}
\usepackage{enumitem}
\usepackage{float}
\usepackage{tikz}
\usepackage[colorlinks=true,linkcolor=Blue,citecolor=Blue,urlcolor=Blue]{hyperref}

\usepackage[caption=false]{subfig}
\usepackage[capitalise]{cleveref}

\newcommand{\R}{\mathbb{R}}
\newcommand{\N}{\mathbb{N}}

\newtheorem{teo}{Theorem}

\begin{document}

\title{A PINNs approach for the computation of eigenvalues in elliptic problems}

\author{Juli\'an Fern\'andez Bonder}
\email{jfbonder@dm.uba.ar}
\affiliation{Departamento de Matem\'atica, Facultad de Ciencias Exactas y Naturales, Universidad de Buenos Aires}
\affiliation{Instituto de C\'alculo UBA-CONICET, Buenos Aires, Argentina}

\author{Ariel Salort}
\email{ariel.salort@ceu.es}
\affiliation{Departamento de Matematicas y Ciencia de Datos, Universidad San Pablo-CEU, CEU Universities, Urbanizaci\'on Montepr\'incipe, 28660 Boadilla del Monte, Madrid, Spain}

\begin{abstract} 
In this paper, we propose a method for computing eigenvalues of elliptic problems using Deep Learning techniques. A key feature of our approach is that it is independent of the space dimension and can compute arbitrary eigenvalues without requiring the prior computation of lower ones. Moreover, the method can be easily adapted to handle nonlinear eigenvalue problems.
\end{abstract}

\maketitle

\section{Introduction}

The impact of artificial intelligence (AI) on everyday life has become undeniable in recent years. From smartphones to image recognition, from medical diagnostics to self-driving cars, machine learning—and deep learning in particular—has transformed the way we interact with technology.

This influence has also extended into mathematics and physics, where machine learning techniques are being applied to analyze complex phenomena that were traditionally approached using classical analytical methods. Among these, problems governed by partial differential equations (PDEs) are especially prominent, due to their central role in modeling physical systems across many disciplines.

Reflecting this trend, a growing number of software libraries and frameworks have been developed as educational and research tools for solving PDE-based problems in computational science and engineering; see, for instance, \cite{Lu-Meng-Mao, Rackauckas, Rackauckas-Innes-Ma-Bettencourt-etal} and references therein.

In parallel, neural network-based approaches for approximating solutions to PDEs have gained increasing attention. These include successful applications to problems such as the Burgers, Eikonal, and heat equations \cite{Blechschmidt-Ernst}, as well as linear diffusion equations in complex two-dimensional geometries \cite{Berg-Nystrom}.

A natural extension of this idea is the use of neural networks (NNs) as general-purpose function approximators for solving differential equations. A prototypical example arises in quantum mechanics, where one seeks to solve the eigenvalue problem
\begin{equation}\label{eigen}
-\Delta u + V(x) u = E u,\quad \text{in } \R^n,
\end{equation}
with a given potential function \( V \colon \R^n \to \R \) and eigenvalue parameter \( E \in \R \). The idea of employing NNs to approximate solutions of such equations dates back to the 1990s \cite{Lagaris-Likas-Fotiadis}, but the field gained significant momentum with the introduction of {\em Physics-Informed Neural Networks} (PINNs) in \cite{Raissi-Perdiakaris-Karniadakis}, which triggered a wave of research and applications (see also \cite{Raissi-Perdiakaris-Karniadakis2, Karniadakis-etal}).

Most early PINN-based work focused on source problems, where the term \( E u \) in \eqref{eigen} is replaced by a known forcing function \( f \). The eigenvalue problem itself has been more recently addressed: first in one dimension \cite{Jin-Mattheakis-Protopapas1, Jin-Mattheakis-Protopapas2, Sakar}, and later in two dimensions \cite{Holliday-Lindner-Ditto}. Related approaches have also been proposed in \cite{Rowan-Evans-Maute-Dootsan}, which include extensions to nonlinear eigenvalue problems.

Several methodologies have been proposed to address eigenvalue problems using neural networks.
The approach in \cite{Sakar} constructs a trial wavefunction that automatically satisfies the boundary conditions, represents the unknown component with a neural network, and trains the network by minimizing an appropriately defined loss function.
In \cite{Jin-Mattheakis-Protopapas1, Jin-Mattheakis-Protopapas2}, the authors introduce an unsupervised neural network architecture that simultaneously learns both the eigenvalues and the corresponding eigenfunctions through a scanning mechanism.
Alternatively, \cite{Rowan-Evans-Maute-Dootsan} employs a neural network to discretize the eigenfunction and minimize the associated Rayleigh quotient.

\normalcolor

\begin{figure}[tb]
\centering
\includegraphics[width=.9\columnwidth]{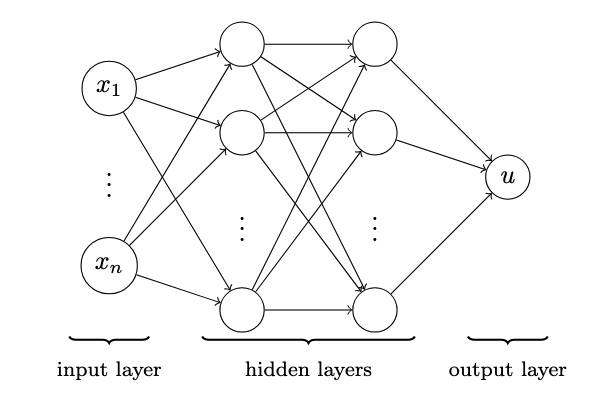}
\caption{A fully connected feedforward NN with an \(n\)-dimensional input and a 1-dimensional output.}
\label{NN.fig}
\end{figure}

In these works, the solution \( u \) is approximated using a fully connected feedforward neural network (see Fig.~\ref{NN.fig}), with variations in the number of hidden layers and activation functions across implementations. To approximate the eigenvalue \( E \), the authors minimize the {\em Rayleigh quotient}:
\[
L(u) = \frac{\int |\nabla u|^2 + V(x) u^2\, dx}{\int u^2\, dx}.
\]
Minimizing \( L(u) \) yields an approximation of the principal eigenvalue \( E_1 \) and its associated eigenfunction \( u_1 \). Higher-order eigenpairs can be computed by iteratively minimizing the same functional while constraining the search to subspaces orthogonal to previously computed eigenfunctions.

This approach, while effective, presents two main limitations:
\begin{enumerate}
\item It is inherently sequential: to compute eigenvalues within a given range, one must first compute all lower ones, which can be inefficient.
\item Its extension to nonlinear problems (such as those involving the \(p\)-Laplace operator) is limited, since orthogonality conditions between eigenfunctions are no longer available.
\end{enumerate}

\section{Our approach}

To overcome the drawbacks mentioned above, we propose a different approach to problem \eqref{eigen}. We define the following loss function:
\begin{equation}\label{Lambda}
\Lambda(u, E) = \frac{\int (\Delta u - V(x) u + Eu)^2\, dx}{\int u^2\, dx}.
\end{equation}
Given a fixed $E\in \R$, minimizing with respect to $u$ yields a {\em loss curve}:
\begin{equation}\label{loss-curve}
\Lambda(E) = \inf_u \Lambda(u, E).
\end{equation}
A proof of the existence of a minimizer $u_E$ for \eqref{loss-curve} is provided in Theorem \ref{teo.min} in the appendix.

It is relatively straightforward to derive a theoretical upper bound for the loss curve $\Lambda(E)$. Indeed, let $u_k$ denote the $k-$th eigenfunction to \eqref{eigen}, normalized such that $\int u_k^2\ dx=1$. Then we have:
$$
\Lambda(E)\le \int (\Delta u_k - V(x) u_k + Eu_k)^2\, dx = (E_k-E)^2,
$$
and consequently,
$$
\Lambda(E) \le \min_{k} (E_k-E)^2.
$$
See Fig.~\ref{Lambda.fig}, where this upper bound is plotted in the two-dimensional case with a confining potential given by $V(x)=0$ for $|x|\le 1$ and $V(x)=\infty$ for $|x|>1$. In this case the eigenvalues are known explicitly. The first four are:
$$
 E_1 = 5.7832,\ E_2 = 14.6819,\ E_3 =  26.3743,\ E_4 =  30.4713.
 $$
\begin{figure}[tb]
\centering
\includegraphics[width=.9\columnwidth]{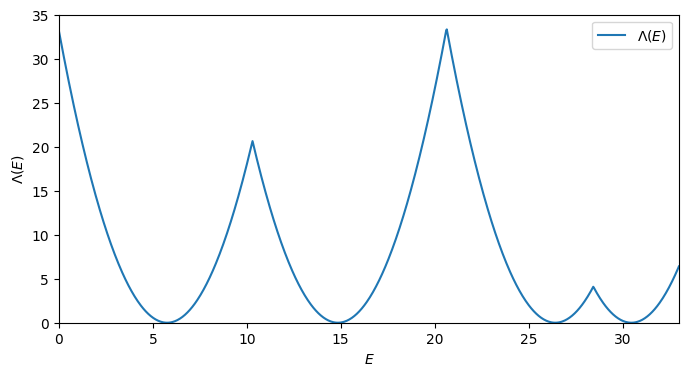}
\caption{The loss curve $\Lambda(E)$ in $2-$dimensional space with a confined potential in the unit disk.}
\label{Lambda.fig}
\end{figure}
Our approach proceeds as follows:

We first fix the interval $[E_*, E^*]$  where the eigenvalues are expected to lie. Then, we take a uniform partition of this interval:
$$
E_*=E^1<E^2<\cdots<E^j=E^*,
$$ 
and for each $E^i$ we train our neural network to minimize $\Lambda(u, E^i)$, obtaining a corresponding minimizer $u^i$.

Finally, we identify the minima of the loss curve $\Lambda(E^i)$. The values $E^i$ at which these minima occur approximate the eigenvalues of \eqref{eigen}, and the associated minimizers $u^i$ approximate the corresponding eigenfunctions.

The aim of this article is to give an approach to compute eigenvalues  based on neural networks whose implementation is simple and no need of previous eigenvalues is required.
Our goal is not to obtain the best estimates for lineal problems (for that aim there is efficient and well-stuided methods based on finite differences/elements).  However, our method is highly flexible and has several adventages over other method of the literature as explained as follows.

The aim of this article is to introduce a neural network–based approach for computing eigenvalues, which  is straightforward to implement and does not require any prior knowledge of the eigenvalues. We emphasize that our objective is not to achieve the most accurate estimates for linear problems, as efficient and well-established methods based on finite differences or finite elements already exist for that purpose. However, the proposed method offers significant flexibility and several advantages over existing approaches in the literature, as detailed below.

Advantages of our method:
\begin{enumerate}
\item  Unlike the aforementioned methods, our approach allows the computation of higher eigenvalues without requiring the prior computation of lower ones.

\item While other methods often face challenges when computing eigenvalues in higher dimensions, our approach has been shown to be effective regardless of the dimensionality.

\item The computation of eigenvalues is robust and works effectively independent of the structure of the potential function $V$ or the geometry of the domain under consideration.

\item The neural network architecture employed in our implementation is simple and extends naturally to nonlinear cases. This enables the computation of higher eigenvalues, producing novel results in several contexts shown ir our examples. While we do not claim more accuracy compared to existing methods, our approach provides the computation of higher eigenvalues even in higher-dimensional settings.

\item Finally, our approach can be readily adapted to more general operators beyond the $p$-Laplacian.
\end{enumerate}

\normalcolor

\section{Implementation details}

We consider a fully connected feedforward neural network (as in Fig.~\ref{NN.fig}) with two hidden layers and the hyperbolic tangent activation function, $\tanh$.

We consider a  fully connected neural network,  implemented as a feed-forward multilayer perceptron with three hidden layers. Each hidden layer contains 128 neurons with $\tanh$ activation functions. The network maps a  spatial coordinate $x\in \R^d$ to a scalar output and is defined as
$$
x \mapsto \text{NN}(x) \in \mathbb{R}
$$
All linear layers are initialized using Xavier uniform initialization with the appropriate gain for the tanh activation, and all biases are set to zero.

To enforce the homogeneous Dirichlet boundary condition on the domain $\Omega$, the raw network output $\tilde u(x)$ is multiplied by a boundary distance function $B(x)$, chosen so that
$$
B(x)=0 \text{ for all } x\in\partial\Omega, \qquad B(x)>0 \text{ for all } x\in\Omega.
$$
This construction ensures that the final approximation automatically vanishes on the boundary. Thus, the network returns
$$
u(x)=B(x)\tilde  u(x),
$$
where $\tilde  u(x)$ is the output of the multilayer perceptron.
\normalcolor

We begin by studying the simpler case, where confining potentials, as described in the previous section, are considered. So, we let
\normalcolor
\begin{equation}\label{conf.V}
V(x) = \begin{cases}
0 & \text{if } x\in \Omega\\
\infty & \text{if } x\not\in \Omega,
\end{cases}
\end{equation}
which is equivalent to solving the Helmholtz problem with homogeneous Dirichlet boundary condition:
\begin{equation}\label{eigen2}
\begin{cases} 
-\Delta u = E u & \text{in }\Omega\\
u=0 & \text{on }\partial\Omega.
\end{cases}
\end{equation}

In this case, to  impose the boundary condition, we multiply the output of the NN by a boundary factor $B(x)=1-x^2$ that is a smooth approximation to the distance to the boundary. To ensure normalization, we include a penalization term in the loss function to enforce $\|u\|_2=1$. The resulting loss is:
\begin{equation}\label{lossmu}
\bar \Lambda(u, E) = \int_\Omega (\Delta u + Eu)^2\, dx + \mu\left(\int_\Omega u^2\, dx - 1\right)^2.
\end{equation}
All integrals are approximated using a Monte Carlo sampling scheme. 

As described in the previous section, we train the NN to minimize $\bar\Lambda(u, E)$ for each value in a discrete partition $E_* = E^1<\cdots<E^j=E^*$. To improve efficiency, for each $E^i$, the network is initialized with the weights and biases obtained from training at $E^{i-1}$.

After training is complete, we obtain the discrete loss curve
$$
\bar \Lambda(E^i)=\bar\Lambda(u^i, E^i)
$$
and each local minimum below a given threshold $\epsilon>0$ is taken as an approximate eigenvalue for \eqref{eigen2}. 
\begin{figure}[tb]
\centering
\includegraphics[width=.9\columnwidth]{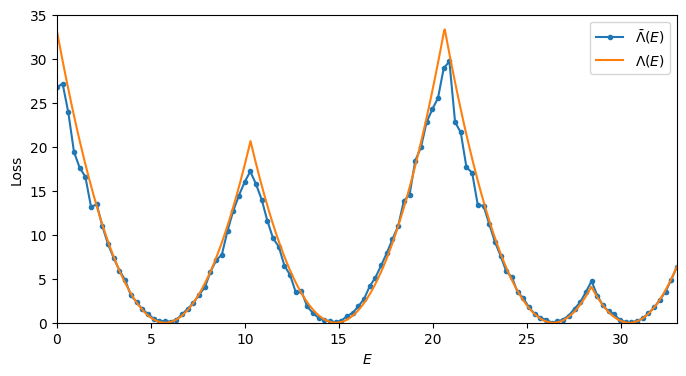}
\caption{The loss curve $\bar\Lambda(E)$ in two-dimensional space with a confined potential in the unit disk compared with the theoretical upper bound.}
\label{lossNN.fig}
\end{figure}

Fig.~\ref{lossNN.fig} shows the loss curve $\bar\Lambda(E^i)$ in the case where $\Omega$ is the unit disk in $\R^2$. The computed eigenvalues are accurate up to the resolution of the discretized $E^i-$grid. Naturally, once an approximate eigenvalue $E^{i_0}$ is detected, the method can be refined locally by using a finer grid in the interval $[E^{i_0-1}, E^{i_0+1}]$ to improve precision.

The associated computed eigenfunctions are displayed in Fig.~\ref{func.fig}
\begin{figure}[tb]
\centering
\begin{tabular}{cc}
\includegraphics[width=.4\columnwidth]{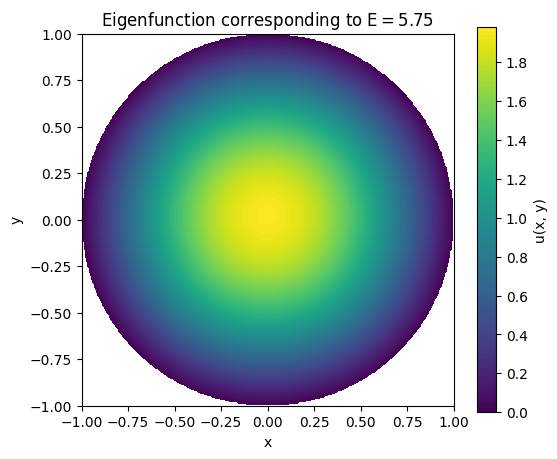} & \includegraphics[width=.4\columnwidth]{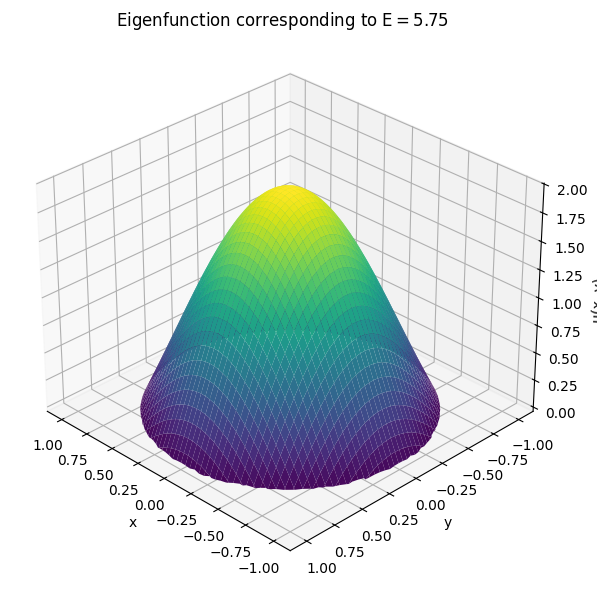} \\
\includegraphics[width=.4\columnwidth]{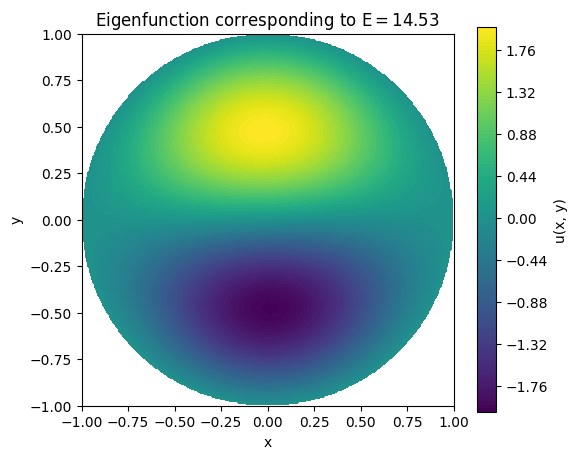} & \includegraphics[width=.4\columnwidth]{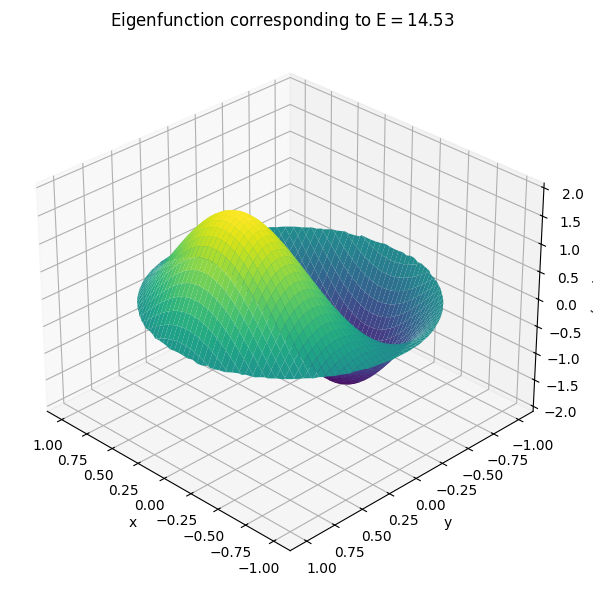} \\
\includegraphics[width=.4\columnwidth]{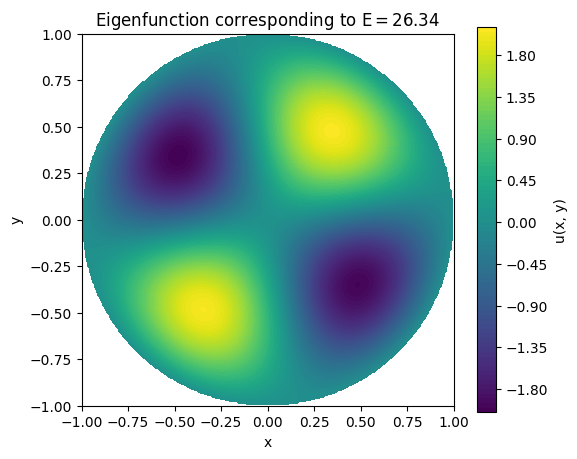} & \includegraphics[width=.4\columnwidth]{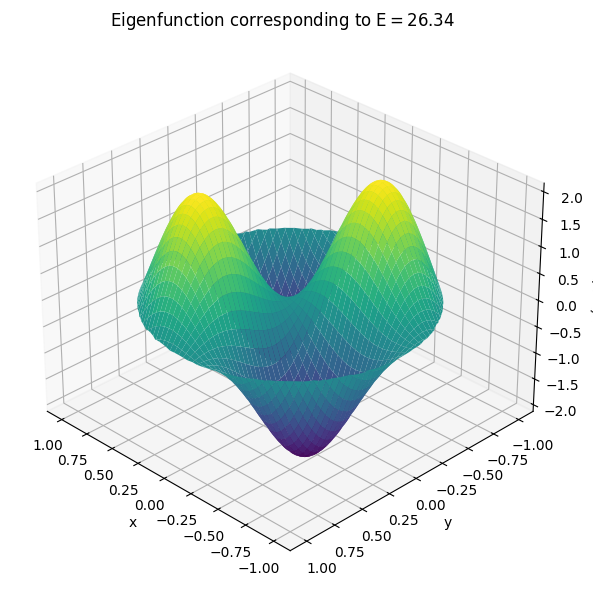} \\
\includegraphics[width=.4\columnwidth]{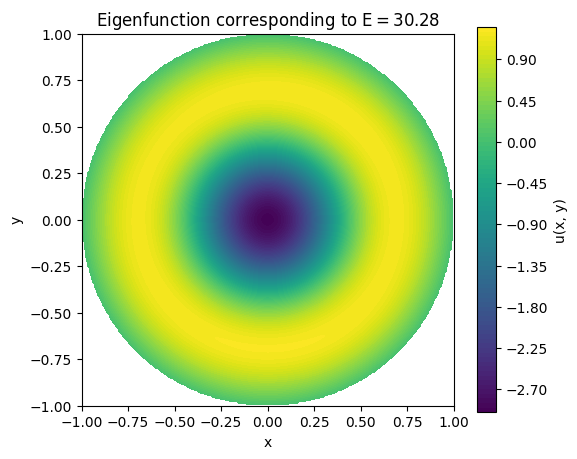} & \includegraphics[width=.4\columnwidth]{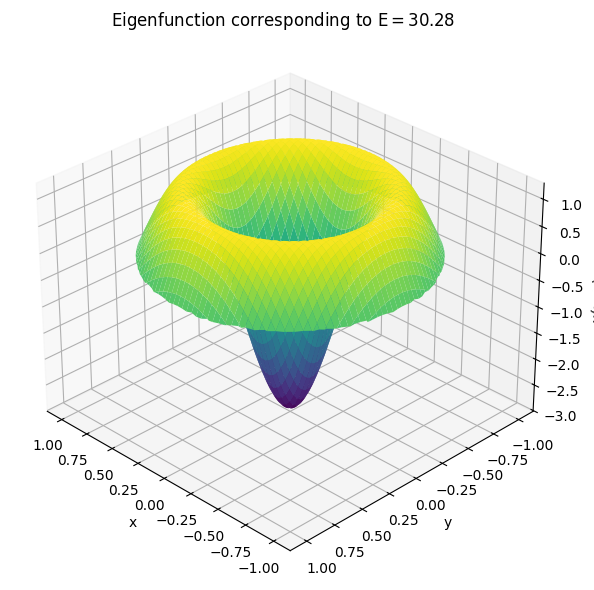} \\
\end{tabular}
\caption{The first four computed eigenfunctions of \eqref{eigen2} in the $2-$dimensional unit disk.}
\label{func.fig}
\end{figure}

\section{Further examples}

In this section, we illustrate the flexibility and robustness of our method by applying it to a variety of settings beyond the two-dimensional unit disk with a confining potential. These examples include the computation of higher-order eigenvalues, problems in higher spatial dimensions, domains with different geometries, and more general potential functions.

All computations and plots presented in this article were generated using the code provided in the ‘Code Availability’ section.
\normalcolor

\subsection{Higher eigenvalues}
Our method enables the computation of higher-order eigenvalues without relying on sequential orthogonalization procedures. By choosing an appropriate interval $[E_*, E^*]$ and refining the discretization grid, we can directly recover eigenvalues that are far from the principal one.

One technical point that must be considered when computing higher eigenvalues is the appropriate choice of the hyperparameter $\mu$ in the loss function \eqref{lossmu}. For a randomly initialized NN output $u$, the first term in the loss is typically of the order
$$
\int_\Omega (\Delta u + E u)^2\, dx \sim c_1 E^2 + c_2,
$$
while the normalization penalty term is of the order
$$
\mu \left( \int_\Omega u^2\, dx - 1 \right)^2 \sim c_3 \mu.
$$
Hence, if $E^2 \gg \mu$, the loss is dominated by the first term, and the network tends to minimize it by driving $u \approx 0$, which is undesirable. To maintain a proper balance between the two terms and ensure meaningful training, it is necessary to scale $\mu \propto E^2 $.

To assess the accuracy of our method, we computed the eigenvalues of problem \eqref{eigen2} in the two-dimensional unit square $[0,1]\times[0,1]$ that lie within the interval $[44, 55]$. In this range, problem \eqref{eigen2} has one eigenvalue, denoted $E_{(1)}$, with value $E_{(1)} = 49.348$. See Fig.~\ref{eigen55.fig} for the computed loss curve in this interval, together with the theoretical upper bound.


For illustrative purposes, we compute the eigenvalues of \eqref{eigen2} in the two-dimensional unit square.   In Table \ref{table.1} we compare the exact eigenvalues with those obtained by our method, together with the relative error   and   residuals. In Figure \ref{loss.lap.square.fig} we display the loss curve $\tilde \Lambda (E)$ alongside the theoretical upper bound, as well as the first two eigenfunctions.

These computations can be readily carried out for different potentials; however, in such cases, exact eigenvalues are not available for comparison.

\begin{table*}[h!] 
\centering 

\begin{tabular}{c c|c|c|c|c}
$m$ & $n$ & $E_{(m,n)} = \pi^2 (m^2 + n^2)$ & Our method & Relative error (\%)&  Residual \\ 
\hline
1 & 1 & $2\pi^2$   $\approx 19.739$ & 19.739033 & $8.9\times 10^{-6}$ & $6.72\times 10^{-3}$ \\
2 & 1 & $5\pi^2$   $\approx 49.348$ & 49.385078 & $7.5\times 10^{-4}$ & $1.16\times 10^{-1}$\\
1 & 2 & $5\pi^2$   $\approx 49.348$ & 49.385078 & $7.5\times 10^{-4}$ & $1.16\times 10^{-1}$\\
2 & 2 & $8\pi^2$   $\approx 78.956$  & 78.962188 & $1.05\times 10^{-5}$& $1.26\times 10^{-1}$  \\
3 & 1 & $10\pi^2$  $\approx 98.696$ & 98.594123 & $1.1\times 10^{-3}$ & $1.40\times 10^{-4}$\\
1 & 3 & $10\pi^2$  $\approx 98.696$ & 98.594123 & $1.1\times 10^{-3}$ & $1.40\times 10^{-4}$
\end{tabular}
\caption{Eigenvalues of problem \eqref{eigen2} in the two-dimensional unit square $[0,1]\times[0,1]$ compared with the exact values.}
\label{table.1}

\end{table*}

\normalcolor

\begin{figure}[tb]
\centering
\includegraphics[width=.9\columnwidth]{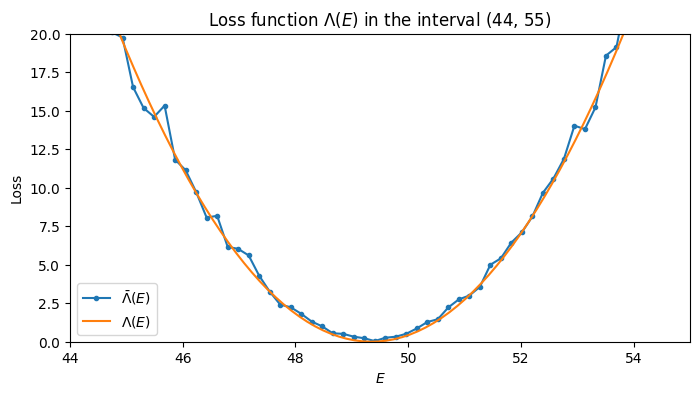}
\caption{The loss curve $\bar\Lambda(E)$ in two-dimensional space  in the unit square computed in the interval [44,55].}
\label{eigen55.fig}
\end{figure}

\subsection{Higher dimensions}

The proposed framework extends naturally to higher spatial dimensions, with minimal changes required in the implementation. As an example, we consider the unit ball in $\R^d$, with $d=3,4$ where the eigenvalue problem becomes:
$$
\begin{cases}
-\Delta u = Eu & \text{in } B^d\\
u=0 & \text{on }\partial B^d.
\end{cases}
$$
Despite the increased computational cost, the method remains effective, as shown in Fig.~\ref{loss3D.fig}.

%
%

In dimension $d=3$, exact formulas for the eigenvalues are known, for $m,n\in \N$, the eigenvalues are computed as $E_{(m,n)}=j_{m,n}^2$, where $j_{m,n}$ denotes the $n-$th positive zero of the Bessel function $J_{m+\frac12}$. Similarly, when $d=4$, the eigenvalues are given by $E_{(m,n)}=y_{m,n}^2$, where $y_{m,n}$ denotes the $n-$th positive zero of the Bessel function $J_{m+1}$.

In Tables \ref{table.2} and \ref{table.3}, we compare the exact eigenvalues with those obtained by our method.

\normalcolor
\begin{table*}[h!]
\centering

\begin{tabular}{cc|c|c|c|c}
$m$ & $n$ & $E_{(m,n)} = j_{m,n}^2$ & Our method & Relative error (\%) & Residual \\  
\hline
0 & 1 & $j_{0,1}^2$   $\approx 9.8686$ & 9.885022 & $1.66\times 10^{-3}$& 8.61$\times 10^{-2}$ \\
1 & 1 & $j_{1,1}^2$   $\approx 20.1907$ & 20.182569 & $4.02\times 10^{-4}$ & 4.59$\times 10^{-2}$\\
2 & 1 & $j_{2,1}^2$   $\approx 33.2251$ & 33.243410 & $5.51\times 10^{-4}$& 1.00$\times 10^{-1}$\\
0 & 2 & $j_{0,2}^2$   $\approx 39.4784$  & 39.494155  & $3.99\times10^{-4} $& 1.81$\times 10^{-1}$ \\
3 & 1 & $j_{3,1}^2$   $\approx 48.9737$  & 48.826976  & $2.99\times 10^{-3}$& 2.24$\times 10^{-1}$ \\
1 & 2 & $j_{1,2}^2$   $\approx 59.6946$  & 59.734069   & $6.61\times 10^{-4}$ & 2.04$\times 10^{-1}$ \\
4 & 1 & $j_{4,1}^2$   $\approx 66.9489$  & 66.991853  & $6.41\times 10^{-4}$ & 4.82$\times 10^{-1}$ 
\end{tabular}
\caption{Eigenvalues of problem \eqref{eigen2} in the unit ball in $\R^3$ compared with the exact values. }
\label{table.2}
 
\end{table*}

%
%
%

\begin{table*}[h!]
\centering
 
\begin{tabular}{cc|c|c|c|c}
$m$ & $n$ & $E_{(m,n)} = y_{m,n}^2$ & Our method & Relative error (\%) & Residual \\ 
\hline
0 & 1 & $y_{0,1}^2$   $\approx 14.6819$ & 14.641796 & $2.73\times 10^{-3}$ & 3.84$\times 10^{-2}$ \\
1 & 1 & $y_{1,1}^2$   $\approx 26.3746$ & 26.257056 & $4.47\times 10^{-3}$& 2.13$\times 10^{-1}$\\
2 & 1 & $y_{2,1}^2$   $\approx 40.7064$ & 40.794700 & $2.16\times 10^{-3}$& 1.98$\times 10^{-1}$\\
0 & 2 & $y_{0,2}^2$   $\approx 49.2184$  & 49.175395  & $8.74\times 10^{-4}$& 6.04$\times 10^{-1}$ \\
\end{tabular}
\caption{Eigenvalues of problem \eqref{eigen2} in the unit ball in $\R^4$ compared wih the exact values. }
\label{table.3}

\end{table*}


\begin{figure}[tb]
\centering
\includegraphics[width=.9\columnwidth]{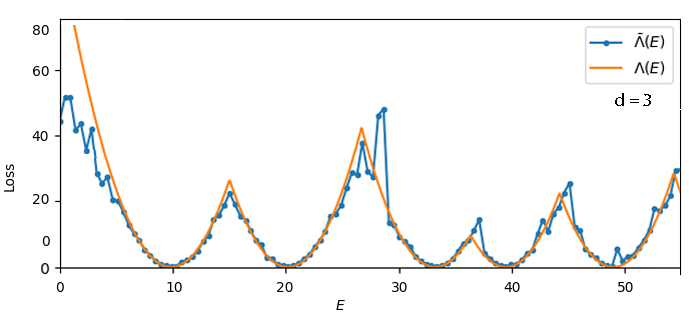}
\includegraphics[width=.9\columnwidth]{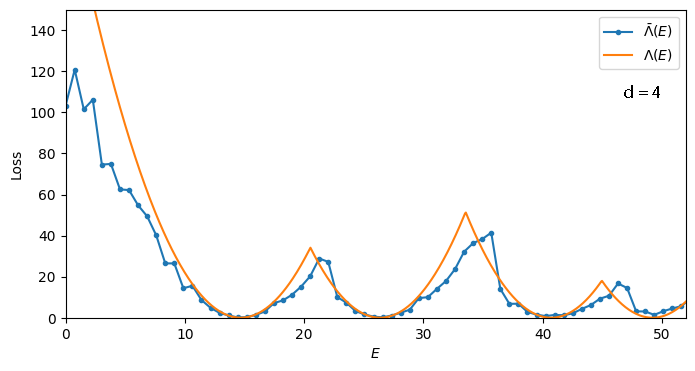}
\caption{The loss curve $\bar\Lambda(E)$ in three-dimensional space (above) and in the four-dimensional space (below) in the unit ball compared with the theoretical upper bound.}
\label{loss3D.fig}
\end{figure}


\subsection{Different geometries}
We also explore domains with more complex geometries, such as squares, annuli, or triangles. The boundary factor $B(x)$ can be adapted to these shapes, ensuring that the Dirichlet condition is enforced. In Fig.~\ref{loss.lap.square.fig}, we show eigenfunctions obtained in a square, in Fig.~\ref{loss.lap.annulus.fig}  the annular region is considered and in Fig.~\ref{loss.lap.triang.fig} the results in a triangle are shown.  
In Table \ref{table.4},  the exact eigenvalues of \eqref{eigen2} in a triangle are compared with those obtained by our method. 


\begin{table*}[h!]
\centering
 
\begin{tabular}{c c|c|c|c|c}
$m$ & $n$ & $E_{m,n} = \pi^2 (m^2 + n^2)$ & Our method & Relative error (\%) & residual \\ 
\hline
1 & 2 & $5\pi^2$   $\approx 49.348$ & 49.369942 & $4.43\times 10^{-4}$ & 2.42$\times 10^{-1}$ \\
2 & 1 & $5\pi^2$   $\approx 49.348$ & 49.369942 & $4.43\times 10^{-4}$ & 2.42e$\times 10^{-1}$\\
1 & 3 & $10\pi^2$   $\approx 98.696$ & 98.892161 & $4.45\times 10^{-7}$ & 1.26e$\times 10^{0}$\\
3 & 1 & $10\pi^2$   $\approx 98.696$  & 98.892161 & $4.45\times 10^{-7}$ & 1.26$\times 10^{0}$  \\
2 & 3 & $13\pi^2$  $\approx 128.996$ & 128.048509 & $2.00\times 10^{-3}$ & 1.98$\times 10^{0}$\\
3 & 2 & $13\pi^2$  $\approx 128.996$ & 128.048509 & $2.00\times 10^{-3}$ & 1.98$\times 10^{0}$ \\
1 & 4 & $17\pi^2$  $\approx 167.551$ & 167.687185 & $5.73\times 10^{-4}$ & 2.16$\times 10^{0}$\\
4 & 1 & $17\pi^2$  $\approx 167.551$ & 167.687185 & $5.73\times 10^{-4}$ &  2.16$\times 10^{0}$ 
\end{tabular}
\caption{Eigenvalues of problem \eqref{eigen2} in the two-dimensional triangle with   vertices $(0,0)$, $(0,1)$, $(1,0)$ compared with the exact values.}
 
\label{table.4}
\end{table*}

\begin{figure}[tb]
\centering
\includegraphics[width=.9\columnwidth]{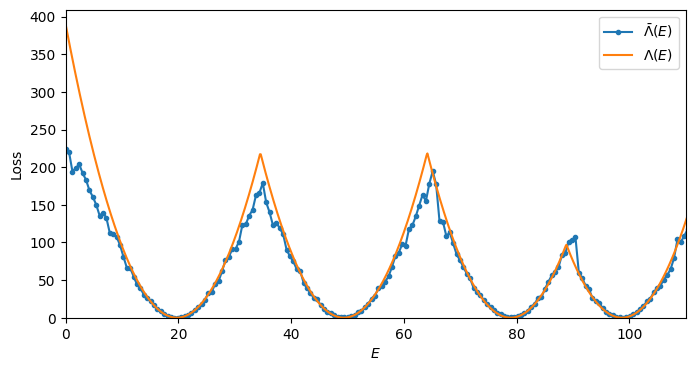}
\includegraphics[width=.45\columnwidth]{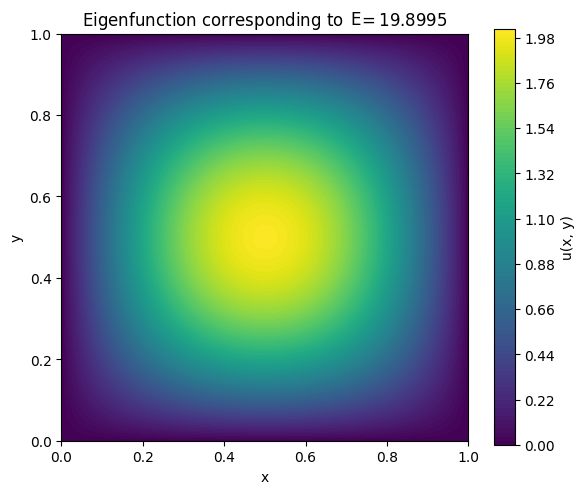}
\includegraphics[width=.45\columnwidth]{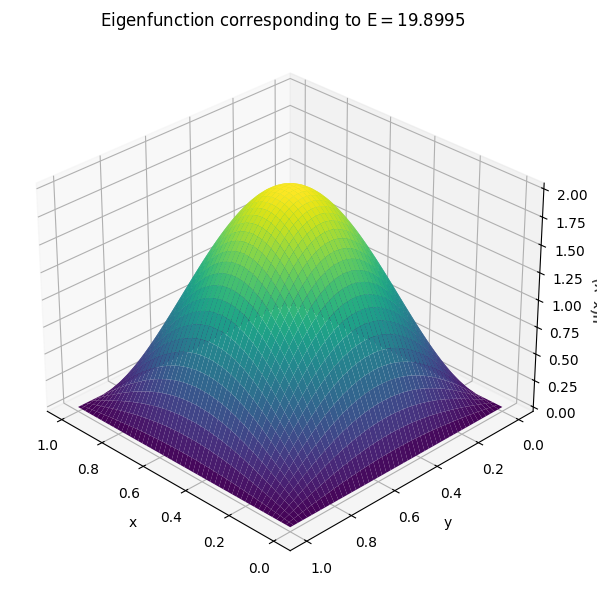}
\includegraphics[width=.45\columnwidth]{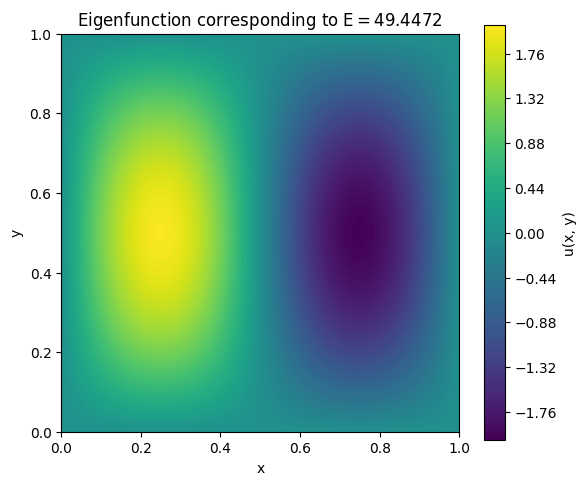}
\includegraphics[width=.45\columnwidth]{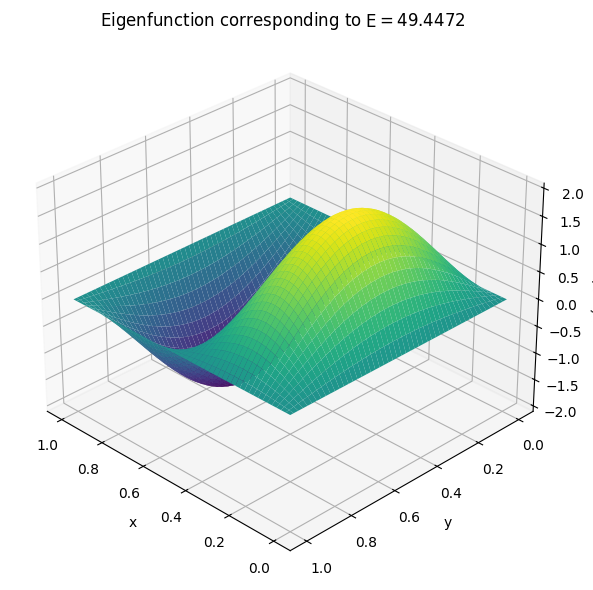}
\caption{The loss curve $\bar\Lambda(E)$ in two-dimensional space in the unit square compared with the theoretical upper bound. Below it is displayed the first two eigenfunction.}
\label{loss.lap.square.fig}
\end{figure}

\begin{figure}[tb]
\centering
\includegraphics[width=.9\columnwidth]{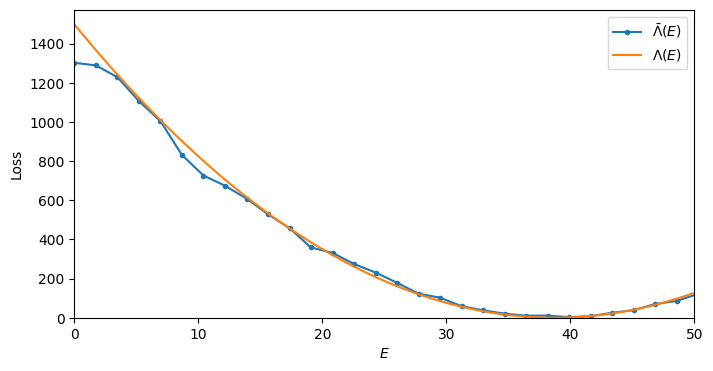}
\includegraphics[width=.45\columnwidth]{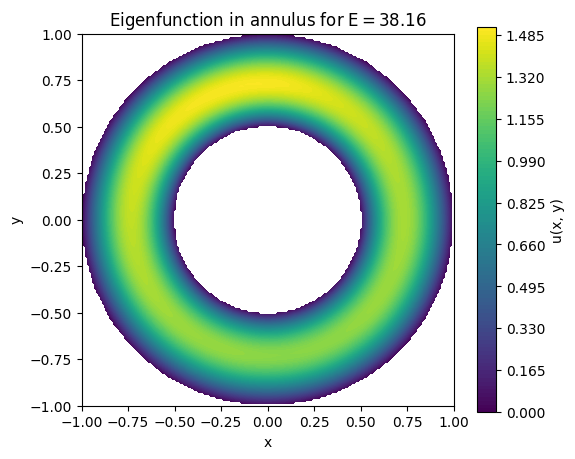}
\includegraphics[width=.45\columnwidth]{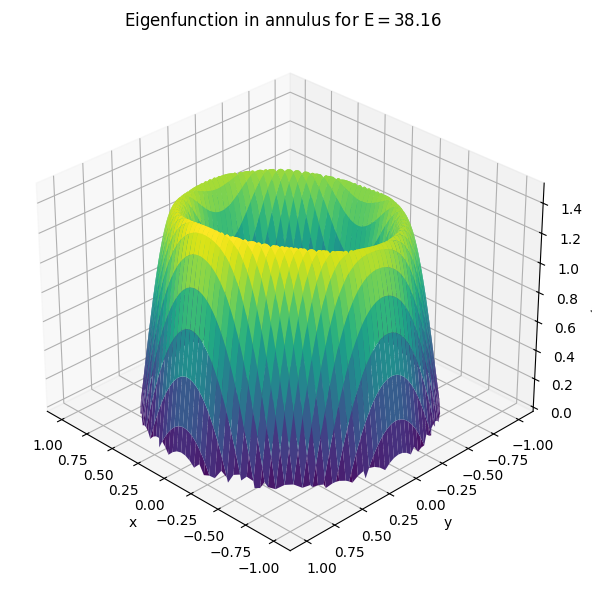}
\caption{The loss curve $\bar\Lambda(E)$ in two-dimensional space in the unit annulus of radii 0.5 and 1 compared with the theoretical upper bound. Below it is displayed the first eigenfunction.}
\label{loss.lap.annulus.fig}
\end{figure}

\begin{figure}[tb]
\centering
\includegraphics[width=.9\columnwidth]{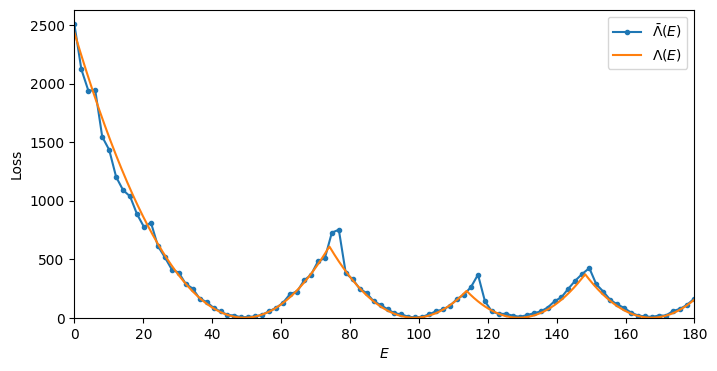}
\includegraphics[width=.45\columnwidth]{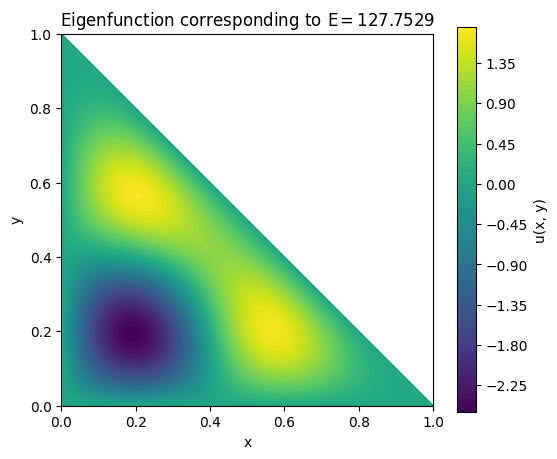}
\includegraphics[width=.45\columnwidth]{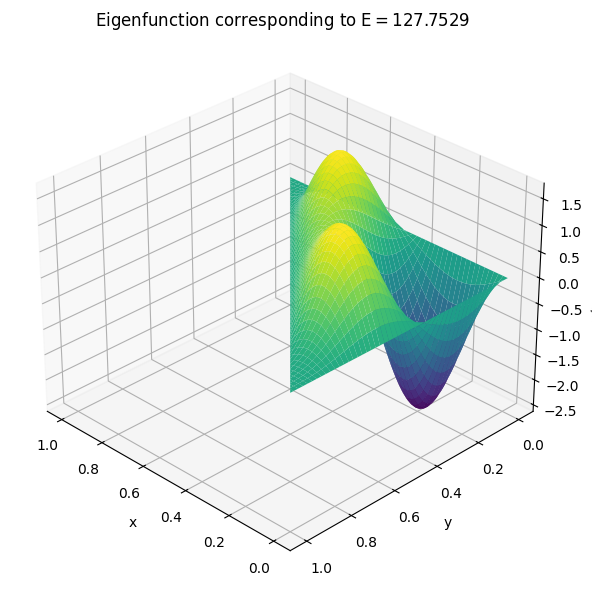}
\includegraphics[width=.45\columnwidth]{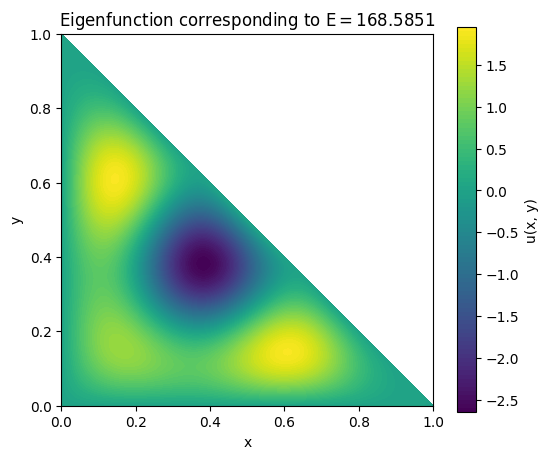}
\includegraphics[width=.45\columnwidth]{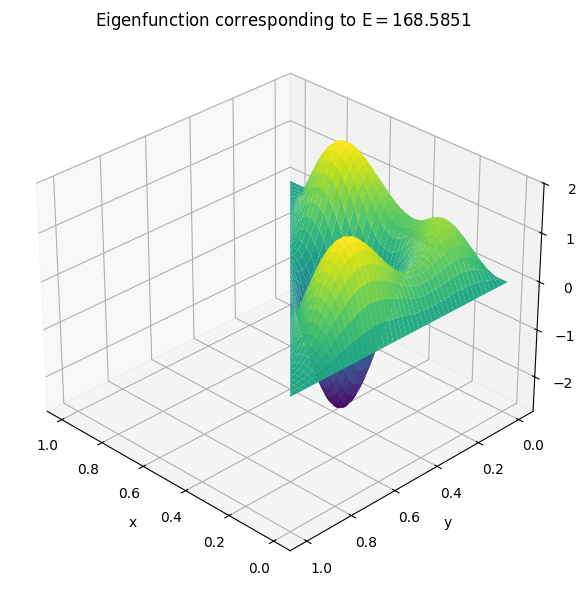}
\caption{The loss curve $\bar\Lambda(E)$ in two-dimensional space  in the triangle of vertices $(0,0)$, $(0,1)$, $(1,0)$  compared with the theoretical upper bound and the third and  fourth computed eigenfunctions.}
\label{loss.lap.triang.fig}
\end{figure}

\subsection{Different potentials}
 
Beyond hard-wall confinement, our approach accommodates general potential functions $V(x)$, including smooth wells, multi-well potentials, step-like profiles, compactly supported wells, among others. 

As an example, we consider a harmonic potential $V(x)=\frac{\omega^2}{2}|x|^2$ for different values of $\omega$, illustrating the method's applicability to physically relevant models. Fig.~\ref{pot1.fig} displays loss curve and the first eigenfunction for $\omega=1$ and Fig.~\ref{pot2.fig} for $\omega=10$.

In Fig.~\ref{fig.well-two}, we display the three-dimensional double-well Gaussian potential $V(x)=-e^{-2|x-x_0|^2} - e^{-2|x+x_0|^2}$, $x_0=(1,0,0)$, together with the loss curve. 

Finally, in Fig.~\ref{fig.well-four} we consider a  three-dimensional four-well compactly supported  potential and we plot the corresponding loss curve. More precisely, given $w = (x,y,z) \in \mathbb{R}^3$, and define the centers of the four wells as $c_1 = (a,a,0)$, $c_2 = (-a,a,0)$, $c_3 = (a,-a,0)$, $c_4 = (-a,-a,0)$, where $a=0.22$. Also, let $r_i = \| w - c_i \|$, $\tilde r_i = \frac{r_i}{\sigma}$ with $\sigma = 0.2$. We define the compactly supported bump function
$$
\phi(\tilde r) =
\begin{cases}
\exp(-(1-\tilde r^2)^{-1}), & \tilde r < 1,\\
0, & \tilde r \ge 1.
\end{cases}
$$
Then, the potential $V(w)$ is defined as
$
V(x) = - \sum_{i=1}^{4} \phi\!\left( \frac{\| x - c_i \|}{\sigma} \right), \quad V_0 = 1.
$

\begin{figure}[tb]
\centering
\includegraphics[width=.8\columnwidth]{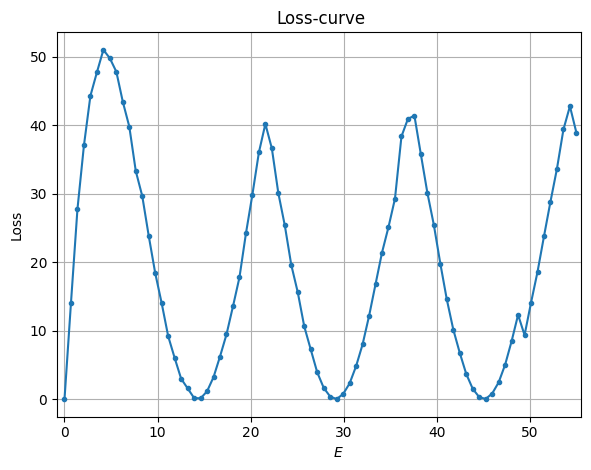}
\includegraphics[width=.45\columnwidth]{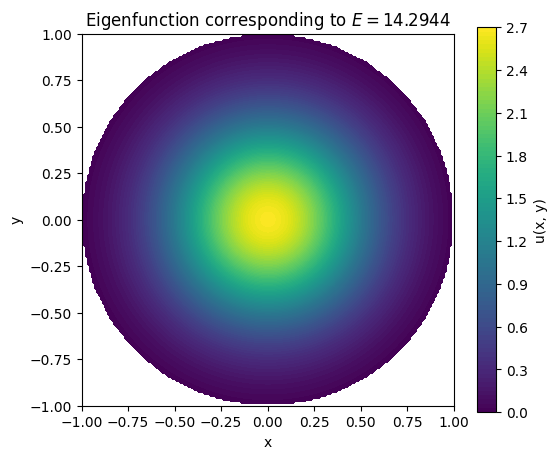}
\includegraphics[width=.45\columnwidth]{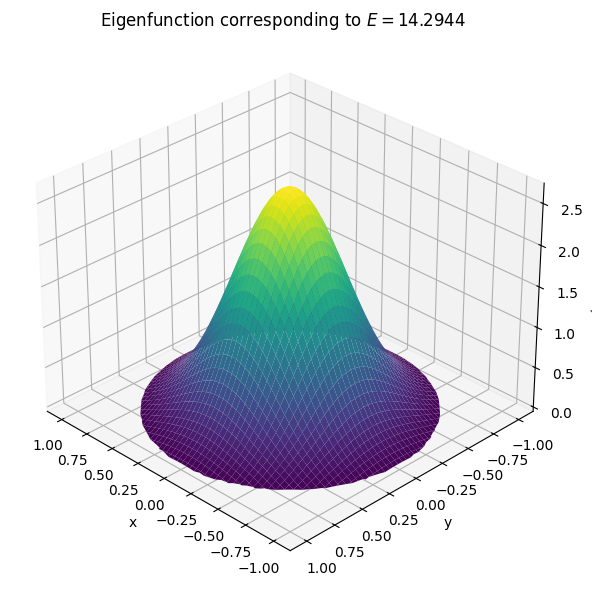}
\caption{The loss curve $\bar\Lambda(E)$ in two-dimensional disk with harmonic potential $V=\frac{\omega^2}{2}|x|^2$ and $\omega=10$ with the first computed eigenfunction.}
\label{pot1.fig}
\end{figure}

\begin{figure}[tb]
\centering
\includegraphics[width=.9\columnwidth]{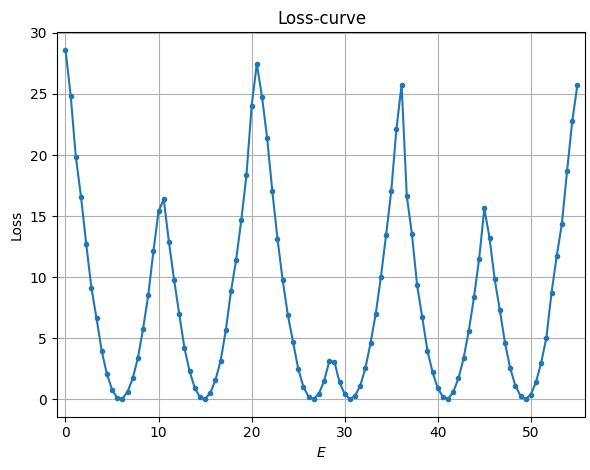}
\includegraphics[width=.45\columnwidth]{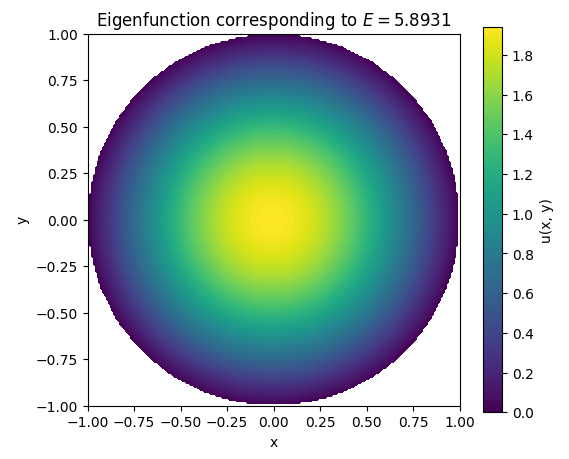}
\includegraphics[width=.45\columnwidth]{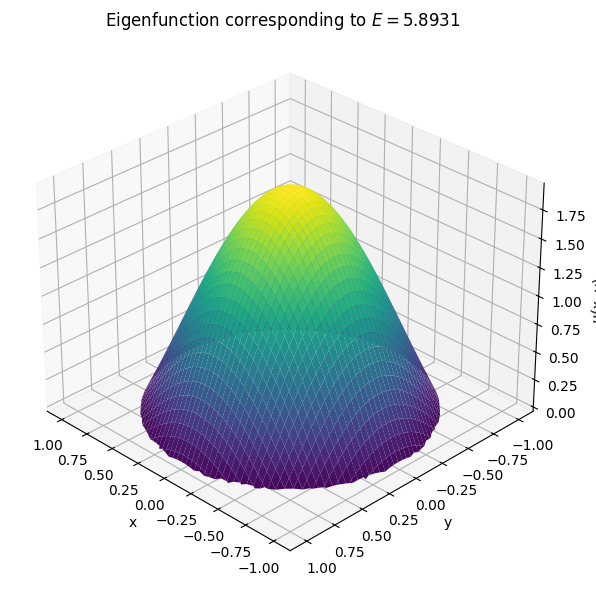}
\caption{The loss curve $\bar\Lambda(E)$ in two-dimensional disk with harmonic potential $V=\frac{\omega^2}{2}|x|^2$ and $\omega=1$ with the first computed eigenfunction.}
\label{pot2.fig}
\end{figure}

\section{Nonlinear problems}

Our method can also be extended to nonlinear eigenvalue problems. As an illustrative example, we consider the eigenvalue problem for the $p-$Laplacian in the two-dimensional unit disk:
\begin{equation}\label{p-eigen}
\begin{cases}
-\nabla \cdot (|\nabla u|^{p-2} \nabla u) = E |u|^{p-2} u & \text{in } D, \\
u = 0 & \text{on } \partial D,
\end{cases}
\end{equation}
where $D \subset \R^2$ is the unit disk and $p > 1$.

To approximate the solution using our framework, we modify the loss function accordingly. For a fixed $E$, we define the following loss:
\begin{align*}
\Lambda_p(u, E) = & \int \left( \Delta_p u + E |u|^{p-2} u \right)^2 \, dx \\
&+ \mu \left( \int |u|^p\, dx - 1 \right)^2,
\end{align*}
where $\Delta_p u = \nabla \cdot (|\nabla u|^{p-2} \nabla u)$.

Here, the normalization condition is adapted to the natural scaling of the $p-$Laplacian: $\|u\|_p = 1$. As in the linear case, the output of the neural network is multiplied by a boundary factor to enforce homogeneous Dirichlet boundary conditions.

Since the nonlinear problem lacks an orthogonality structure among eigenfunctions, traditional sequential methods based on orthogonal projections are not directly applicable. Our approach, however, remains valid and does not rely on any such structure, which makes it suitable for problems like \eqref{p-eigen}. As a result, we are still able to identify the first two eigenvalues as the values of $E$ for which the loss function attains a sufficiently small minimum.

%
%
%

In Table \ref{table.5}, for $p=2.2$, we compare our computations with the numerical values of \eqref{p-eigen} reported in \cite{HJ11}, where a variant of the Constrained Steepest Descent Method was used to compute the first eigenpair and the Constrained Mountain Pass Algorithm to compute the second eigenpair.  Figure~\ref{loss.plap.disk.fig} shows the loss curve and the corresponding eigenfunction.

We emphasize that our method extends naturally to the computation of eigenvalues of the $p-$Laplacian in higher dimensions. in higher dimensions. To the best of our knowledge, no previous numerical results are available for this setting. Figure~\ref{loss.plap.disk.R3.fig} shows the loss curve obtained for the $p-$Laplacian on the unit ball in dimension $d=3$.

Table \ref{table.6} exhibits the same comparison when the domain $D$ is the square $[0,2]\times [0,2]$.

\begin{table*}[h!]
\centering
 
\begin{tabular}{c| c|c|c|c|c|c}
k \,& Numerical $E_{k}$ from \cite{HJ11} & Our result & Relative error (\%)&  Residual \\ 
\hline
1  & 6.5320    & 6.679258 & $2.25\times 10^{-2}$& 1.81$\times 10^0$ \\
2  & 18.395    & 19.441806 & $5.69\times 10^{-2}$&9.65$\times 10^{-1}$\\
2  & -    & 37.621714 & & 1.82$\times 10^0$
\end{tabular}
\caption{Eigenvalues of the $p-$Laplacian with $p=2.2$ in the planar unit ball  compared with numerical values obtained in \cite{HJ11}.}
\label{table.5}
 
\end{table*}

%
%
%
%

\begin{table*}[h!]
\centering
 
\begin{tabular}{c|c|c|c|c|c}
k \,& Numerical $E_{k}$ from \cite{HJ11} & Our result & Relative error (\%)&  Residual \\ 
\hline
1  & 5.4952    & 5.861383 & $1.31\times 10^{-1}$& 6.22$\times 10^{-2}$ \\
2  & 15.144    & 15.144490 & $3.23\times 10^{-5} $&1.64$\times 10^{0}$\\
\end{tabular}
\caption{Eigenvalues of the $p-$Laplacian with $p=2.2$ in the planar square of lenght 2 compared with numerical values obtained in \cite{HJ11}.}
\label{table.6}
 
\end{table*}

\normalcolor

\begin{figure}[tb]
\centering
\includegraphics[width=.9\columnwidth]{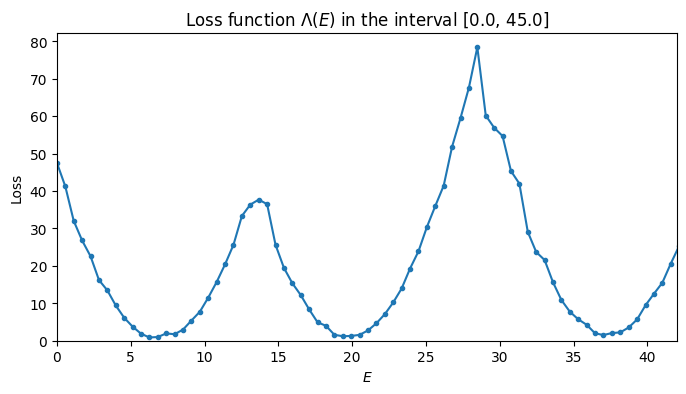}
\includegraphics[width=.45\columnwidth]{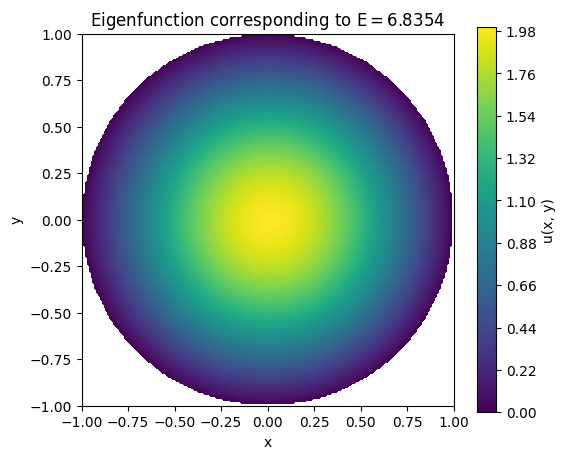}
\includegraphics[width=.45\columnwidth]{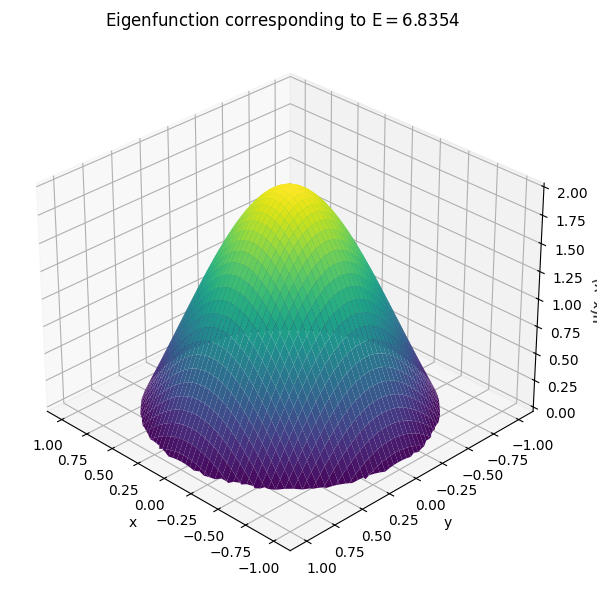}
\includegraphics[width=.45\columnwidth]{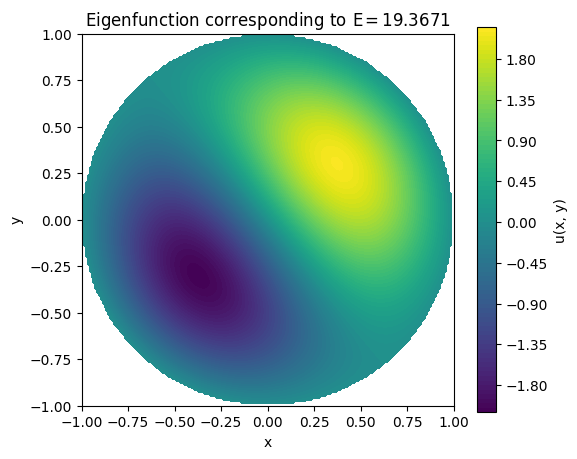}
\includegraphics[width=.45\columnwidth]{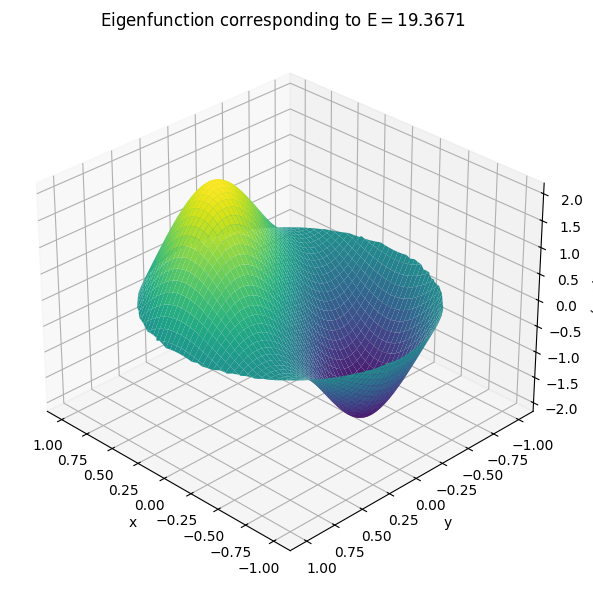}
\caption{The loss curve $\bar\Lambda(E)$ in two-dimensional space for the p-Laplacian with p=2.2 in the unit disk, and the first two computed eigenfunctions.}
\label{loss.plap.disk.fig}
\end{figure}

\begin{figure}[tb]
\centering
\includegraphics[width=.7\columnwidth]{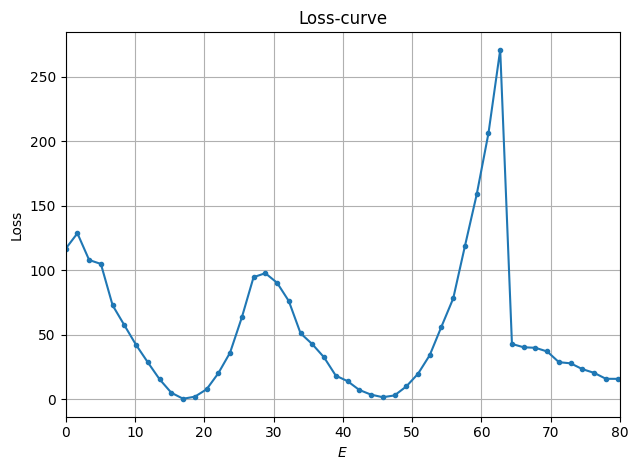}
\caption{The loss curve $\bar\Lambda(E)$ in three-dimensional space for the p-Laplacian with p=2.5 in the unit disk. This gives $E_1=17.4513$ and $E_2=45.8908$.}
\label{loss.plap.disk.R3.fig}
\end{figure}

\begin{figure}[tb]
\centering
\includegraphics[width=.7\columnwidth]{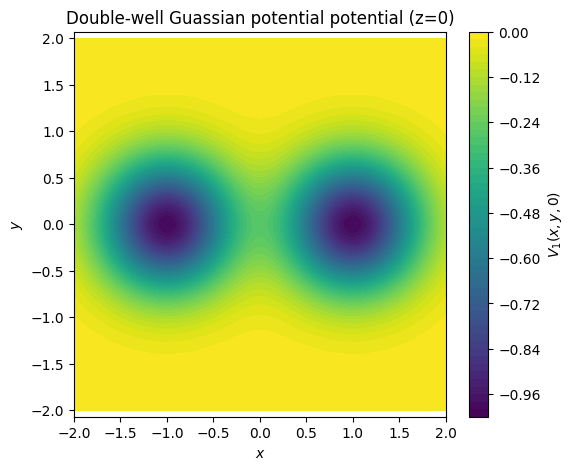}
\includegraphics[width=.7\columnwidth]{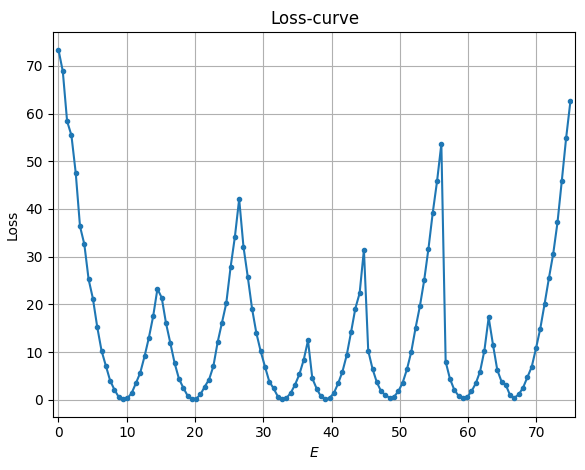}
\caption{The loss curve $\bar\Lambda(E)$ in three-dimensional space for the Laplacian with a double-well Gaussian potential in the unit disk.}
\label{fig.well-two}
\end{figure}

\begin{figure}[tb]
\centering
\includegraphics[width=.7\columnwidth]{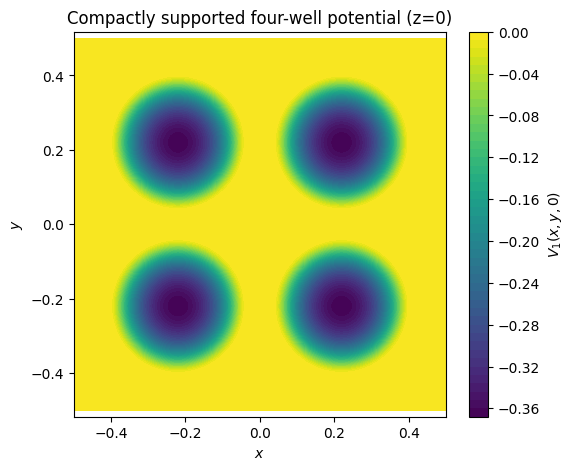}
\includegraphics[width=.7\columnwidth]{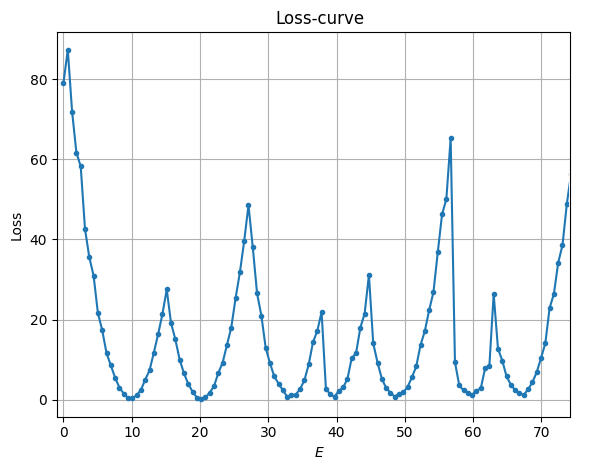}
\caption{The loss curve $\bar\Lambda(E)$ in three-dimensional space for the Laplacian with a Compactly supported four-well potential in the unit disk.}
\label{fig.well-four}
\end{figure}

\section{Code availability}

The Python code used in this work is openly accessible at the following URL:

\begin{center}
\url{https://github.com/amsalort/PINN-eigenvalues}
\end{center}

\section{Conclusions}

The method proposed in this work provides a flexible and effective framework for computing eigenvalues and eigenfunctions of differential operators using neural networks. Unlike classical approaches, it does not rely on the Rayleigh quotient or require orthogonality conditions between eigenfunctions. As a result, it is particularly well-suited for computing higher eigenvalues and for addressing nonlinear problems where such structures are no longer available.

Moreover, the approach naturally extends to irregular domains and high-dimensional settings, where traditional mesh-based methods become increasingly difficult to apply. Its non-sequential nature allows for the identification of multiple eigenvalues without the need to compute the entire spectrum in order. These features make it a robust and broadly applicable alternative for spectral problems in both linear and nonlinear contexts.

\section{acknowledgments}

This work was partially supported by ANPCyT under grant PICT 2019-3837 and by CONICET under grant PIP 11220150100032CO.

JFB thanks Pablo Groisman for encouraging him to teach a course on the Mathematical Foundations of Deep Learning in the fall semester of 2025, which inspired the author to pursue this problem.

\bibliography{biblio}

\appendix

\section{The minimization problem}

In this appendix, we show that the minimization problem leading to the loss curve $\Lambda(E)$ defined in \eqref{loss-curve} admits a solution. In order to keep things simple, we consider the case in which $V(x)$ is a confining potential of the form \eqref{conf.V} with $\Omega\subset \R^n$ bounded.

To this end, we must specify the function space over which the infimum is taken. The natural choice is the Sobolev space $H^2(\Omega) \cap H^1_0(\Omega)$. The main result of this section reads as follows:

\begin{teo}\label{teo.min}
Let $V(x)$ be as in \eqref{conf.V} with $\Omega\subset \R^n$ bounded. Given $E\ge 0$, there exists $u_E\in H^2(\Omega)\cap H^1_0(\Omega)$ such that
$$
\Lambda(E) = \Lambda(u_E, E) = \inf_{u\in H^2(\Omega)\cap H^1_0(\Omega)}\Lambda(u, E),
$$
where $\Lambda(u, E)$ is given in \eqref{Lambda}.
\end{teo}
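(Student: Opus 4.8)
The plan is to apply the direct method of the calculus of variations. Since the potential vanishes on $\Omega$, for $u\in H^2(\Omega)\cap H^1_0(\Omega)$ the functional reduces to
\[
\Lambda(u,E) = \frac{\int_\Omega (\Delta u + Eu)^2\,dx}{\int_\Omega u^2\,dx},
\]
which is homogeneous of degree zero. I would therefore restrict to the constraint set $\mathcal{M} = \{u \in H^2(\Omega)\cap H^1_0(\Omega) : \int_\Omega u^2\,dx = 1\}$, on which minimizing $\Lambda(\cdot,E)$ is equivalent to minimizing the numerator $N(u) = \int_\Omega (\Delta u + Eu)^2\,dx$. Setting $m = \inf_{\mathcal{M}} N \ge 0$, I fix a minimizing sequence $(u_k)\subset\mathcal{M}$ with $N(u_k)\to m$.

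The first genuine step is coercivity. By the triangle inequality in $L^2$ together with $\|u_k\|_2 = 1$,
\[
\|\Delta u_k\|_2 \le \|\Delta u_k + E u_k\|_2 + E\|u_k\|_2 = \sqrt{N(u_k)} + E,
\]
so $\|\Delta u_k\|_2$ is uniformly bounded. Invoking the elliptic regularity estimate $\|u\|_{H^2}\le C\|\Delta u\|_2$ on $H^2(\Omega)\cap H^1_0(\Omega)$ then yields a uniform $H^2$ bound. Since this space is a Hilbert space, I extract $u_k \rightharpoonup u_E$ weakly in $H^2(\Omega)\cap H^1_0(\Omega)$; as $(u_k)$ is in particular bounded in $H^1_0(\Omega)$ and $\Omega$ is bounded, the compact embedding $H^1_0(\Omega)\hookrightarrow\hookrightarrow L^2(\Omega)$ gives $u_k \to u_E$ strongly in $L^2$.

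It remains to pass to the limit. Strong $L^2$ convergence gives $\int_\Omega u_E^2\,dx = \lim_k \int_\Omega u_k^2\,dx = 1$, so $u_E\in\mathcal{M}$ and in particular $u_E\not\equiv 0$; this is exactly the point where compactness prevents the constraint from being lost in the limit. For the numerator, weak $H^2$ convergence gives $\Delta u_k \rightharpoonup \Delta u_E$ in $L^2$ while $u_k \to u_E$ strongly, so $\Delta u_k + E u_k \rightharpoonup \Delta u_E + E u_E$ weakly in $L^2$. Weak lower semicontinuity of the $L^2$-norm then yields
\[
N(u_E) \le \liminf_k N(u_k) = m,
\]
and since $u_E\in\mathcal{M}$ we conclude $N(u_E) = m$, i.e. $u_E$ is a minimizer and $\Lambda(E)=\Lambda(u_E,E)$.

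The main obstacle is the coercivity step: the clean bound $\|u\|_{H^2}\le C\|\Delta u\|_2$ for $u\in H^2(\Omega)\cap H^1_0(\Omega)$ is not automatic for an arbitrary bounded open set and requires some regularity of $\partial\Omega$ (for instance $\partial\Omega\in C^{1,1}$, or convexity of $\Omega$). I would either add such a hypothesis on $\Omega$ or simply restrict to the geometries used in the experiments (disk, ball, and the like), where the estimate is standard. Everything else — the normalization, the weak $H^2$ compactness, the strong $L^2$ compactness, and the lower semicontinuity — is routine once the uniform $H^2$ bound is in hand.
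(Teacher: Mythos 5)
Your proposal is correct and follows essentially the same route as the paper: the direct method with an $L^2$-normalized minimizing sequence, a uniform bound on $\|\Delta u_k\|_2$, weak $H^2$ compactness, a compact embedding into $L^2$ to preserve the constraint in the limit, and weak lower semicontinuity of the quadratic numerator. Two minor points in your favor: your triangle-inequality coercivity bound is cleaner than the paper's Young-inequality estimate (which takes $\epsilon = 1/4E$ and so degenerates at $E=0$), and the elliptic-regularity caveat you flag (that $\|u\|_{H^2}\le C\|\Delta u\|_2$ on $H^2(\Omega)\cap H^1_0(\Omega)$ requires boundary regularity, e.g.\ convexity or $C^{1,1}$) is a genuine subtlety that the paper's proof silently assumes when it passes from the bound on $\int_\Omega (\Delta u_j)^2\,dx$ and $\|u_j\|_2=1$ to boundedness in $H^2(\Omega)\cap H^1_0(\Omega)$.
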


\begin{proof}
The result follows from the \emph{direct method in the Calculus of Variations}. Fix $E \ge 0$, and consider a minimizing sequence $\{u_j\}_{j \in \N} \subset H^2(\Omega) \cap H^1_0(\Omega)$ such that $\|u_j\|_2 = 1$ and
$$
\Lambda(E) = \lim_{j \to \infty} \Lambda(u_j, E).
$$
In particular, the sequence $\Lambda(u_j, E)$ is bounded and nonnegative:
\begin{equation}\label{cota.Lambda}
0 \le \Lambda(u_j, E) \le C,
\end{equation}
for some constant $C > 0$.

Now observe that
\begin{align*}
\Lambda(u_j, E) &= \int_\Omega (\Delta u_j + Eu_j)^2\, dx \\
&= \int_\Omega (\Delta u_j)^2\, dx + 2E\int_\Omega u_j \Delta u_j \, dx+ E^2.
\end{align*}
Using the inequality $ab\le \epsilon a^2 + \frac{b^2}{4\epsilon}$ for any $a, b>0$ and $\epsilon >0$we estimate:
$$
\int_\Omega |u_j| |\Delta u_j|\, dx \le \epsilon \int_\Omega (\Delta u_j)^2\, dx + \frac{1}{4\epsilon}\int_\Omega u_j^2\, dx.
$$
Taking $\epsilon=1/4E$, we obtain
\begin{equation}\label{cota.Lambda2}
\Lambda(u_j, E)\ge \frac12 \int_\Omega (\Delta u_j)^2\, dx + E^2-E.
\end{equation}
Combining \eqref{cota.Lambda}, \eqref{cota.Lambda2} and the normalization $\|u_j\|_2=1$ we conclude that the sequence $\{u_j\}_{j\in\N}$ is bounded in $H^2(\Omega)\cap H^1_0(\Omega)$.

Therefore, up to a subsequence, we have $u_j \rightharpoonup u_E$ weakly in $H^2(\Omega) \cap H^1_0(\Omega)$ for some $u_E$ in that space.

Since the functional 
$$
u \mapsto \int_\Omega (\Delta u + E u)^2\, dx
$$ 
is convex in $u$, it is weakly lower semicontinuous. Thus,
$$
\int_\Omega (\Delta u_E + E u_E)^2\, dx \le \liminf_{j \to \infty} \int_\Omega (\Delta u_j + E u_j)^2\, dx.
$$
Moreover, the immersion $H^2(\Omega)\cap H^1_0(\Omega)\hookrightarrow L^2(\Omega)$ is compact, so
$$
\int_\Omega u_E^2\, dx = \lim_{j\to\infty} \int_\Omega u_j^2\, dx = 1.
$$
Hence, $u_E$ is admissible and attains the minimum.
\end{proof}
\end{document}